\newtheorem{tw}{Theorem}  
\newtheorem{lem}[tw]{Lemma}
\newcommand{\pusty}{\emptyset}   
\newcommand{\trou}{\vspace{.5cm}}
\newcommand{\cbdo}{\hfill \rule{.1in}{.1in}}
\begin{document}
\hyphenation{every}
\title{A note on uniquely embeddable $2$-factors}
\author{Igor Grzelec\thanks{The corresponding author. Email:  grzelec@agh.edu.pl}, Monika Pilśniak, Mariusz Woźniak\\Department of Discrete Mathematics\\AGH University of Krakow\\Poland}
\maketitle
\begin{abstract}
Let $C_{n_1}\cup C_{n_2}\cup \ldots \cup C_{n_k}$ be a 2-factor \emph{i.e.} a vertex-disjoint union of cycles. In this note we completely characterize those 2-factors that are uniquely embeddeble in their complement.
\end{abstract}

\section{Introduction}
We consider only finite, undirected graphs of order $n=|V(G)|$ and size $e(G)=|E(G)|$. All graphs will be assumed to have neither loops nor multiple edges.

We shall need some additional definitions in order to formulate the results. If a graph $G$ has order $n$ and size $m$, we say that $G$ is an $(n,m)$ graph.

Assume now that $G_1$ and $G_2$ are two graphs with disjoint vertex sets. The {\it union} $G=G_1\cup G_2$  has $V(G)=V(G_1)\cup V(G_2)$ and  $E(G)=E(G_1)\cup E(G_2)$. If a graph is the union of $n$ ($\ge 2$)  disjoint copies of a graph $H$, then we write $G=nH$.

For our next  operation, the conditions are quite different. Let now  $G_1$ and $G_2$ be  graphs with $V(G_1)=V(G_2)$ and $E(G_1)\cap E(G_2)=\pusty$. The {\it edge sum} $G_1\oplus G_2$ has $V(G)=V(G_1)=V(G_2)$ and $E(G)=E(G_1)\cup E(G_2)$.

An {\it embedding} of $G$ (in its complement $\overline{G}$) is a permutation $\sigma$ on $V(G)$ such that if an edge $xy$ belongs to $E(G)$, then $\sigma (x)\sigma (y)$ does not belong to $E(G)$.

In others words, an embedding is an (edge-disjoint) 
{\it placement} (or {\it packing}) of two copies of $G$ into a complete graph $K_n$.

 The following theorem was proved, independently, in \cite{BoEl}, \cite{BuSc} and \cite{SaSp}.

\begin{tw} \label{E2}
Let $G=(V,E)$ be a graph of order $n$. If $|E(G)|\leq n-2$ then $G$ can be embedded in its complement $\overline{G}$.  \cbdo
\end{tw}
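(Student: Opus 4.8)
The natural route is induction on the order $n$: delete one vertex, apply the theorem to the smaller graph, and then re-insert the deleted vertex into the embedding. The starting observation is a degree count. Since $2e(G)=\sum_{x}\deg(x)\le 2n-4$, if we let $a$ denote the number of vertices of degree at most $1$ then $2e(G)\ge 2(n-a)$, hence $a\ge n-e(G)\ge 2$. Thus $G$ always has at least two vertices of degree $\le 1$, and in particular a vertex $v$ with $\deg(v)\le 1$ to delete. The case of no edges is trivial (any permutation is an embedding), so I may assume $e(G)\ge 1$.

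The reduction is clean when we can delete a vertex $v$ of degree exactly $1$, say with neighbour $u$: then $G-v$ has $n-1$ vertices and $e(G)-1\le(n-1)-2$ edges, so the inductive hypothesis supplies an embedding $\sigma^-$ of $G-v$ in its complement. To extend $\sigma^-$ to a permutation $\sigma$ of $V(G)$ I would choose a vertex $w\ne v$ and set $\sigma(v)=\sigma^-(w)$, $\sigma(w)=v$, and $\sigma(x)=\sigma^-(x)$ otherwise; one checks this is automatically a permutation of $V(G)$. Every edge of $G-v$ not meeting $w$ is still placed correctly because $\sigma$ agrees with $\sigma^-$ there, so only the edges at $w$ and the single edge $uv$ impose conditions, and these reduce to a short list of non-adjacency requirements on $w$ (for instance, $w$ must not be a $G$-neighbour of the vertex that $\sigma^-$ maps onto $u$). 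One then argues by counting forbidden choices that, among the $n-1$ candidates for $w$, at least one is admissible. Note that if $v$ is \emph{isolated} the re-insertion is free: no edge of $G$ meets $v$, every condition is vacuous, and any swap $w$ extends $\sigma^-$ to an embedding of $G$.

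Consequently the difficulty concentrates in two places. The first is the counting just mentioned: the forbidden set for $w$ can a priori be as large as the degree of some image vertex, so the argument must either select a degree-$1$ vertex whose neighbour has small degree, or strengthen the inductive statement (for example by producing an embedding with a prescribed fixed point, or with none) so that the re-insertion always succeeds. The second, and I expect the genuine obstacle, is the \textbf{tight boundary case} in which $G$ has no vertex of degree exactly $1$: then all low-degree vertices are isolated, every other vertex has degree $\ge 2$, and deleting an isolated vertex leaves a graph on $n-1$ vertices with $n-2=(n-1)-1$ edges, one over the inductive budget. Here I would exploit that such a $G$ carries \emph{at least two} isolated vertices together with the very high edge-density of $\overline{G}$: one isolated vertex can serve as slack while the remaining (minimum-degree-$\ge 2$, hence cyclic) part of $G$ is placed greedily into the almost complete complement, or else one proves a companion statement covering "$e(G)\le n-1$ with an isolated vertex, excluding the spanning star". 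The star $K_{1,n-1}$, whose complement has maximum degree $n-2$ while the star needs a vertex of degree $n-1$, is precisely the extremal graph showing that the bound $n-2$ cannot be relaxed, which is why this boundary case is where the proof must be most careful.
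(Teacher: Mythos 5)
First, a point of comparison: the paper does not prove Theorem \ref{E2} at all. It is the classical packing theorem quoted from Bollob\'as--Eldridge, Burns--Schuster and Sauer--Spencer, so there is no in-text proof to measure your argument against; your proposal has to stand on its own.

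Judged on its own, it is an outline with two holes that you yourself flag, and both are genuine. (i) In the reinsertion step for a pendant vertex $v$ with neighbour $u$, the swap $\sigma(v)=\sigma^-(w)$, $\sigma(w)=v$ forces $w$ to avoid the $G$-neighbourhood of $(\sigma^-)^{-1}(u)$ and, when $uw\notin E(G)$, forces $\sigma^-(w)$ to avoid the neighbourhood of $\sigma^-(u)$. These forbidden sets have sizes equal to vertex degrees in $G$, and an $(n,n-2)$ graph can have a vertex of degree $n-3$ (take $G=K_{1,n-3}\cup K_2$), so the count against the $n-1$ candidates for $w$ does not close; one genuinely needs either a careful choice of which pendant vertex to delete or a strengthened induction hypothesis, and you supply neither. (ii) The boundary case with no vertex of degree exactly one is not merely ``where the proof must be most careful'': when $e(G)=n-2$ and there are exactly two isolated vertices, the non-isolated part is $2$-regular, i.e.\ a disjoint union of cycles spanning $n-2$ vertices (e.g.\ $G=C_{n-2}\cup 2K_1$) --- precisely the $2$-factors this paper is about. ``Placing it greedily into the almost complete complement'' is not an argument: packing a cycle with itself already fails for $C_3$ and $C_4$, so the isolated vertices must be used in an essential, non-greedy way. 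As it stands the proposal correctly locates the difficulties of the standard inductive proof but resolves neither of them; it is a proof plan, not a proof.
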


The example of the star $K_{1,n-1}$  shows  that  Theorem \ref{E2} cannot be improved by raising the size of $G$. The following theorem, proved in \cite{BuSc1}, give the full characterization of graphs with order $n$ and size $n-1$ that are embeddable.   

\begin{tw} \label{E3}
Let $G=(V,E)$ be a graph of order $n$. If $|E(G)|\leq n-1$ then either $G$ is embeddable or $G$ is isomorphic to one of the following graphs: $K_{1, n-1}$,  $K_{1, n-4} \cup K_3$ with $n\geq8$, $K_1 \cup K_3$, $K_2 \cup K_3$, $K_1 \cup 2K_3$, $K_1 \cup C_4$.\cbdo
\end{tw}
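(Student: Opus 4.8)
The plan is to deduce Theorem~\ref{E3} from Theorem~\ref{E2} by strong induction on $n$. Since every graph with $|E(G)|\le n-2$ is embeddable by Theorem~\ref{E2}, the entire content concerns graphs with exactly $|E(G)|=n-1$, and I shall assume this throughout. Two structural remarks fix the landscape. First, a graph on $n$ vertices with $n-1$ edges is either connected, in which case it is a tree (connected plus acyclic, since $n-1$ edges cannot support both connectivity and a cycle), or it is disconnected and contains a cycle. Second, from $\sum_v \deg(v)=2n-2$ one sees that $G$ always has a vertex of degree at most $1$; moreover, if $G$ has no vertex of degree exactly $1$, then $G$ has at least one isolated vertex, and in the extremal situation of a single isolated vertex the remaining part is $2$-regular, i.e.\ a disjoint union of cycles. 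These remarks split the argument into a \emph{leaf case} and a \emph{no-leaf case}. Note that a tree always has leaves, so the connected case falls under the leaf case.

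In the leaf case, let $v$ have degree $1$ with unique neighbour $u$, and set $G'=G-v$, a graph of order $n-1$ and size $n-2$, to which the induction hypothesis (Theorem~\ref{E3} at order $n-1$) applies. The heart of this case is an \emph{extension lemma}: given an embedding $\sigma'$ of $G'$ in its complement, I would build an embedding $\sigma$ of $G$ by choosing a vertex $w\in V(G')$ and setting $\sigma(w)=v$, $\sigma(v)=\sigma'(w)$, and $\sigma(x)=\sigma'(x)$ otherwise. A direct check shows $\sigma$ is a permutation of $V(G)$ and that edge-disjointness reduces to two avoidance conditions, $w\notin N_{G'}\!\left(\sigma'^{-1}(u)\right)$ and $\sigma'(w)\notin N_{G'}\!\left(\sigma'(u)\right)$. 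The number of forbidden $w$ is at most $\deg_{G'}(\sigma'^{-1}(u))+\deg_{G'}(\sigma'(u))$ plus a bounded correction, so a good $w$ exists once this quantity is smaller than $n-1$. As $G'$ is sparse, the inequality fails only when degrees concentrate on a few vertices, i.e.\ in star-like situations, which is precisely where the exceptions must come from.

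It then remains to treat the ways the extension can break down. When $G'$ is itself exceptional (a star, a star-plus-triangle, or a sporadic graph), I would analyse directly how attaching $v$ changes embeddability: attaching $v$ to the centre of a star reproduces $K_{1,n-1}$, and attaching it to the centre of the already-exceptional $K_{1,n-5}\cup K_3$ (for $n\ge 9$) produces $K_{1,n-4}\cup K_3$, while all other attachments create enough non-adjacency to embed. It is illuminating that the small members of this family unify the sporadic entries: $K_{1,n-4}\cup K_3$ equals $K_1\cup K_3$ for $n=4$ and $K_2\cup K_3$ for $n=5$ (both non-embeddable), is embeddable for $n=6,7$, and is a genuine exceptional family from $n=8$ onward, with $n=8$ as the crucial base case where $G'=K_{1,3}\cup K_3$ is already embeddable yet $G$ still fails to extend. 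In the no-leaf case $G$ is an isolated vertex together with a union of cycles; here the triangle $K_3=C_3$ is the only real obstruction, since $K_3$ admits no embedding at all, and the small combinations account for the remaining sporadic exceptions $K_1\cup 2K_3$ and $K_1\cup C_4$. For every larger union of cycles I would exhibit an explicit rotational (cyclic-shift) embedding, using the isolated vertex to absorb the single place where a naive shift would collide.

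The main obstacle I anticipate is not the positive direction of the extension lemma but its negation: to certify an exception one must show that \emph{no} embedding $\sigma'$ of $G'$ and \emph{no} choice of $w$ (nor any alternative insertion of $v$ into a cycle of $\sigma'$) yields an embedding of $G$, and this over all embeddings simultaneously. Pinning this down exactly at the threshold configurations—above all the $n=8$ birth of the $K_{1,n-4}\cup K_3$ family and the four sporadic graphs—together with organising the no-leaf analysis so that triangles emerge as the sole surviving obstruction among all unions of cycles, is where the real combinatorial work lies. The finitely many base cases of small $n$ would be disposed of by direct inspection.
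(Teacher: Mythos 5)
First, a point of order: the paper does not prove this statement at all --- Theorem~\ref{E3} is quoted from Burns and Schuster \cite{BuSc1} and used as a black box (the box after the statement marks it as cited without proof). So there is no in-paper argument to compare yours against; what follows is an assessment of your proposal on its own terms. Your overall plan (strong induction on $n$, remove a leaf, extend an embedding of $G'=G-v$ by a transposition-like modification, and isolate the star-like configurations where the counting fails) is the standard and essentially correct route to results of this type, and your two extension conditions $w\notin N_{G'}(\sigma'^{-1}(u))$ and $\sigma'(w)\notin N_{G'}(\sigma'(u))$ are the right ones. The observation that $K_{1,n-4}\cup K_3$ unifies the sporadic entries $K_1\cup K_3$ and $K_2\cup K_3$ is also correct and genuinely clarifying.

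There are, however, two concrete gaps. The first is a false reduction: in the no-leaf case you assert that $G$ is ``an isolated vertex together with a union of cycles,'' but that only holds in what you yourself called the extremal situation of a \emph{single} isolated vertex. With $j\ge 2$ isolated vertices the non-trivial part has $n-j$ vertices, $n-1$ edges and minimum degree at least $2$ without being $2$-regular; for example $2K_1\cup(K_4-e)$ has $6$ vertices, $5$ edges and no vertex of degree $1$, and is covered by neither your leaf case nor your cyclic-shift analysis. These graphs are all embeddable, but your sketch never says why. The second gap is quantitative: your counting bound $\deg_{G'}(\sigma'^{-1}(u))+\deg_{G'}(\sigma'(u))<n-1$ genuinely fails beyond stars --- for a double star on $n-1$ vertices the two centres already have degree sum exactly $n-1$, so every $w$ can be forbidden for a given $\sigma'$ --- and at that point one must either vary $\sigma'$ or classify and treat the concentrated-degree graphs (double stars, brooms, star-plus-triangle, \emph{etc.}) one by one; together with the negative direction (showing that \emph{no} embedding of any kind exists for the listed exceptions, not merely that the leaf extension fails), this is the bulk of the Burns--Schuster argument, and your proposal defers it rather than supplying it. As written, the proposal is a sound plan with an incorrect case reduction and the hardest portion left open.
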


Let us consider now the  problem of the uniqueness. First, we have to precise what we mean by {\it distinct} embeddings.

Let $\sigma$ be an embedding of the graph $G=(V,E)$. We denote by  $\sigma (G)$ the graph with the vertex set $V$ and the edge set   $\sigma ^* (E)$ where the map $\sigma ^*$ is induced by  $\sigma$. Since, by definition of an embedding, the sets $E$ and $\sigma ^* (E)$ are disjoint we may form the graph $G\oplus\sigma (G)$. 

Two embeddings $\sigma_1$, $\sigma_2$ of a graph $G$ are said to be {\it distinct} if the graphs $G\oplus\sigma_1 (G)$ and $G\oplus\sigma_2 (G)$ are not isomorphic. A graph $G$  is called {\it uniquely embeddable} if for all embeddings $\sigma$ of $G$, all graphs $G\oplus\sigma (G)$ are isomorphic.

The problem of uniqueness has so far been the subject of two papers. The next theorem, proved in \cite{Woz1}, characterizes all $(n,n-2)$ graphs that are uniquely embeddable.

\begin{tw} \label{e1}
Let $G$ be a graph of order $n$ and size $e(G)=n-2$. Then either $G$ is not uniquely embeddable or $G$ is isomorphic to one of the six following graphs: $K_2\cup K_1$, $2K_2$, $K_3\cup 2K_1$, $K_3\cup K_2\cup K_1$, $K_3\cup 2K_2$, $2K_3\cup 2K_1$.   \cbdo
\end{tw}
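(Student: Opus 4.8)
The plan is to reduce the whole problem to a statement about one isomorphism invariant of $G\oplus\sigma(G)$. The key observation is that the degree of a vertex $x$ in $G\oplus\sigma(G)$ is $d_G(x)+d_G(\sigma^{-1}(x))$, so the degree sequence of $G\oplus\sigma(G)$ is precisely the multiset $\{\,d_G(v)+d_G(\sigma(v)):v\in V\,\}$. Since the degree sequence is an isomorphism invariant, to prove that a given $G$ is \emph{not} uniquely embeddable it suffices to exhibit two embeddings $\sigma_1,\sigma_2$ for which these two multisets differ; thus the bulk of the argument becomes the combinatorial question of which ``degree pairings'' $v\mapsto\sigma(v)$ an embedding can realise. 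When the multiset happens to be forced, I would fall back on finer invariants of $G\oplus\sigma(G)$, namely its number of triangles and its component structure.

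First I would record the structural consequences of $e(G)=n-2$. Writing $c$ for the number of components, the cyclomatic number is $e(G)-n+c=c-2$, so $G$ has at least two components and exactly $c-2$ independent cycles; in particular the degree sum is $2n-4$, the graph is very sparse, and it has many vertices of degree at most one. This sparsity is the source of freedom: by Theorem~\ref{E2} an embedding already exists, and the same slack lets one reroute a single chosen vertex without destroying edge-disjointness. A quick count shows that graphs with $\Delta(G)\le 1$ (unions of edges and isolated vertices) can only occur for $n\le 4$, so for $n\ge 5$ there is always a vertex of degree at least $2$ to work with.

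The main step is to show that once $n$ exceeds a small threshold, $G$ admits two distinct embeddings. The generic argument takes a vertex $u$ of maximum degree and builds one embedding in which $u$ is $\sigma$-paired with another vertex of large degree and a second in which $u$ is paired with a vertex of degree $0$ or $1$; sparsity guarantees both exist, and they produce different maximum degrees in $G\oplus\sigma(G)$, hence different degree sequences. This disposes of every graph whose degrees are not essentially constant, for example $K_{1,n-2}\cup K_1$ and all graphs with a long path or large tree component. For the few near-regular families where the pairing multiset is rigid — unions of triangles, edges and isolated vertices such as $tK_3\cup 2K_1$ — I would switch to the auxiliary invariants: for $t\ge 3$ there is enough room to make a second copy of some triangle either share a vertex with a first copy or be vertex-disjoint from all of them, changing the number of triangles or components of $G\oplus\sigma(G)$. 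I expect that reductions removing a whole component, landing one in the $(n,n-1)$ regime governed by Theorem~\ref{E3}, can organise these cases inductively.

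What remains is a finite computation for the small values $3\le n\le 8$, and this is where the six exceptions are produced and where the real difficulty lies. For each small $G$ one must either place two edge-disjoint copies in two non-isomorphic ways, or prove that every placement is forced into one isomorphism type. The latter is proved by counting: two edge-disjoint triangles in $K_5$ must share exactly one vertex, so $(K_3\cup 2K_1)\oplus\sigma(K_3\cup 2K_1)$ is always the bowtie and $K_3\cup 2K_1$ is uniquely embeddable. The hardest instances are the tightly packed ones $2K_2$, $K_3\cup 2K_2$ and $2K_3\cup 2K_1$, where the isolated vertices (or the sheer lack of room) are the only flexibility and one must rule out every alternative embedding by hand; indeed $2K_3$ alone does not even embed in its complement, so in $2K_3\cup 2K_1$ the two extra vertices are forced to absorb the conflicts in an essentially unique way. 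Carrying out this bounded but delicate verification, and checking that the threshold argument leaves no graph of order $n\ge 9$ uncovered, is the crux of the proof.
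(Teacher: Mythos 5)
First, a point of reference: the paper does not prove Theorem~\ref{e1} at all --- it is quoted as a known result from \cite{Woz1} and marked closed without argument --- so there is no in-paper proof to measure your attempt against; it has to stand on its own. As it stands, it is an outline whose central step is asserted rather than established. The claim that for $n$ above a small threshold one can build one embedding pairing a maximum-degree vertex $u$ with a high-degree vertex and another pairing it with a vertex of degree at most one is exactly the hard part: Theorem~\ref{E2} guarantees \emph{some} embedding, but gives you no control over where $\sigma$ sends a prescribed vertex, and ``rerouting a single vertex'' must preserve edge-disjointness of $E$ and $\sigma^*(E)$, which is not automatic. Moreover the headline invariant is too crude in exactly the generic case: for $G=P_{n-1}\cup K_1$ the set $A$ of degree-$2$ vertices has $|A|=n-3$, so $|A\cap\sigma(A)|\ge n-6$ and every embedding with $n\ge 7$ produces maximum degree $4$ in $G\oplus\sigma(G)$; you would have to show the full multiset $\{d(v)+d(\sigma(v))\}$ (i.e.\ the value $|A\cap\sigma(A)|$) can be made to take two values, and no such argument is given. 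The vague clause ``this disposes of every graph whose degrees are not essentially constant'' hides the entire case analysis.

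There is also a concrete error in the treatment of the rigid families: for $tK_3\cup 2K_1$ there are only two vertices outside the black triangles, so a red triangle \emph{cannot} be vertex-disjoint from all black triangles, and it automatically shares a vertex with one of them; the dichotomy you propose is vacuous and distinguishes nothing. Some other invariant (triangle counts over fixed vertex subsets, cut vertices, component structure of $G\oplus\sigma(G)$) is needed there, and for $t=2$ (the exceptional graph $2K_3\cup 2K_1$) one must prove the opposite, namely that no invariant can differ. Finally, the finite verification for $3\le n\le 8$ --- which is where all six exceptional graphs live, and which includes showing that \emph{every} non-exceptional small graph has two distinct embeddings --- is explicitly deferred. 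The correct ingredients are visible (the degree-sum formula $d_{G\oplus\sigma(G)}(x)=d_G(x)+d_G(\sigma^{-1}(x))$, the count $c-2$ of independent cycles, the bowtie argument for $K_3\cup 2K_1$, the non-embeddability of $2K_3$), but the proof of the main reduction and of the exhaustive small-case analysis is missing.
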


By \textit{double star} $S(p,q)$ we mean a tree obtained from two stars $S_{p+1}$ and $S_{q+1}$ by joining their centers by an edge. By $S'_n$ for $n=q+3$ we denote a double star $S(1,q)$. In \cite{Otwinowska} the following characterization of uniquely embeddable forests was proved.

\begin{tw} 
Let $F$ be a forest of order $n$ having at least one edge. Then either $F$ is not uniquely embeddable or $F$ is isomorphic to one of the following graphs: $K_2 \cup K_1$, $2K_2$, $3K_3$, $S(p,q)$ or $S'_n$. \cbdo
\end{tw}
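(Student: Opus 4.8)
The plan is to organize the argument by the number of components $c$ of $F$, using that a forest of order $n$ with $c$ components has exactly $n-c$ edges. This splits the problem into three regimes: trees ($c=1$, size $n-1$), two-component forests ($c=2$, size $n-2$), and forests with $c\ge 3$ components (size at most $n-3$). The two-component case is immediate: since $e(F)=n-2$, Theorem~\ref{e1} applies verbatim, so a uniquely embeddable such $F$ must be one of its six graphs; intersecting that list with the acyclic graphs (discarding every entry containing a triangle) leaves precisely $K_2\cup K_1$ and $2K_2$, which is exactly the contribution of this regime.

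For trees I would first dispose of the star: by Theorem~\ref{E3} the only non-embeddable tree of order $n$ is $K_{1,n-1}$, and a graph admitting no embedding is \emph{a fortiori} not uniquely embeddable, so it falls under the first alternative. For an embeddable tree the goal is to show that unique embeddability forces diameter at most $3$, that is, $F$ is a double star $S(p,q)$ (with the thin case $S(1,q)=S'_n$ recorded separately). The positive half — that every double star really is uniquely embeddable — would be a structural verification: the only vertices of $F$ of degree above $1$ are the two centres $u,v$, and since $\deg_{F\oplus\sigma(F)}(w)=\deg_F(w)+\deg_F(\sigma^{-1}(w))$, tracking the four vertices $u,v,\sigma^{-1}(u),\sigma^{-1}(v)$ (together with $\sigma(u),\sigma(v)$) pins down the isomorphism type of the union independently of the particular embedding $\sigma$.

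The remaining and, to my mind, hardest work is the negative direction for the generic forests: every tree of diameter at least $4$, and every forest with $c\ge 3$ components other than the matching $3K_2$, must be shown \emph{not} uniquely embeddable. Here I would argue constructively, exhibiting for each such $F$ two embeddings $\sigma_1,\sigma_2$ whose unions $F\oplus\sigma_1(F)$ and $F\oplus\sigma_2(F)$ are distinguished by an isomorphism invariant such as the degree sequence or the multiset of cycle lengths. A diameter path of length at least $4$, a pair of distinct non-trivial components, or a spare isolated vertex each supplies enough slack to re-route one copy in two inequivalent ways; the main obstacle is making this uniform across the whole infinite family, since the branching of a tree and the component sizes of a forest are arbitrary, so the constructions must be stated parametrically rather than case by case.

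Finally, the sporadic disconnected example is isolated as a short parity lemma. When $F=3K_2$, both $F$ and $\sigma(F)$ are perfect matchings, so their edge-disjoint union is a $2$-regular simple graph that decomposes into two matchings and is therefore a disjoint union of \emph{even} cycles; on six vertices the only such graph is $C_6$, whence every embedding yields the same union and $3K_2$ is uniquely embeddable. The identical count shows $2K_2$ forces $C_4$ and that $mK_2$ with $m\ge 4$ fails (it realises both a single long cycle and a split into shorter even cycles), keeping the disconnected part of the classification internally consistent.
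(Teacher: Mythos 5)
First, a point of comparison: the paper does not prove this theorem at all. It is imported from the reference \cite{Otwinowska} and stated with the proof omitted, so there is no in-paper argument to measure your proposal against; it can only be judged on its own terms. (You are right, incidentally, to read the list item ``$3K_3$'' as $3K_2$ --- $3K_3$ is not a forest, so this is a typo in the statement.) The parts of your outline that are actually complete are correct: a forest with $c$ components has $n-c$ edges, so the two-component case follows verbatim from Theorem~\ref{e1}, whose only acyclic members are $K_2\cup K_1$ and $2K_2$; the star $K_{1,n-1}$ is the unique non-embeddable forest by Theorem~\ref{E3}; and the observation that $F\oplus\sigma(F)$ for a perfect matching $F$ is an edge-disjoint union of two perfect matchings, hence a disjoint union of even cycles of length at least $4$, cleanly settles $3K_2$ (forced $C_6$), $2K_2$ (forced $C_4$), and the failure of $mK_2$ for $m\ge 4$.

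The two regimes that constitute essentially the whole content of the theorem are, however, left as declarations of intent. (1) For trees you assert that unique embeddability forces diameter at most $3$, and that every double star is uniquely embeddable, but prove neither. The degree identity $\deg_{F\oplus\sigma(F)}(w)=\deg_F(w)+\deg_F(\sigma^{-1}(w))$ is correct, but it immediately shows that the degree sequence of the union \emph{varies} with the pairing of the centres $u,v$ with their preimages (compare $\sigma(u)=u$ with $\sigma(u)$ a leaf): to make your argument work you must determine which pairings are feasible for which $(p,q)$ --- this is a genuine case analysis, not a one-line observation --- and even a common degree sequence does not determine the isomorphism type of $F\oplus\sigma(F)$, so the ``pins down'' step is unsupported. (2) The negative direction --- two non-isomorphic unions for every tree of diameter at least $4$ and for every forest with at least three components other than $3K_2$ --- is the bulk of the theorem, and ``exhibit two embeddings distinguished by an invariant, stated parametrically'' restates the problem rather than solving it; you acknowledge the difficulty yourself. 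A workable route, in the spirit of the present paper, is to produce one embedding whose union contains a distinguished local configuration (a $K_4$ or a triangle, as in Lemma~\ref{l7}) and one that avoids it (as in Case B of Section~2), but until such constructions are written down uniformly in the parameters, the proposal is a plan rather than a proof.
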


{\bf Remark.} The main references of the paper and of other packing problems are the  following survey papers
\cite{Yap}, \cite{Woz5} or \cite{Woz6}.

The aim of this note is to consider the problem of uniqueness of embedding for 2-factors. 

\begin{tw}\label{Sums}
Let $G=C_{n_1}\cup C_{n_2}\cup \ldots \cup C_{n_k}$ be a vertex-disjoint union of $k$ cycles. The cycles $C_3$, $C_4$ and the graph $2C_3$ are not embeddable. The graphs: $C_5$, $C_6$, $C_3 \cup C_4$, $C_3 \cup C_5$, $3C_3$ and $4C_3$ are uniquely embeddable. For other graph $G$ there exist at least two distinct embeddings.
\end{tw}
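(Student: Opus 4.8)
The plan is to use throughout the single observation that a $2$-factor $G$ on $n$ vertices has exactly $n$ edges, so that for any embedding $\sigma$ the superposition $H_\sigma:=G\oplus\sigma(G)$ is a $4$-regular graph on $n$ vertices which decomposes into two copies of $G$; consequently its complement $\overline{H_\sigma}$ is $(n-5)$-regular. By definition $G$ is uniquely embeddable precisely when all the graphs $H_\sigma$ are isomorphic, so the whole statement reduces to controlling the isomorphism type of $H_\sigma$. Note that because $e(G)=n$, the general embedding Theorems \ref{E2} and \ref{E3} (covering size $\le n-1$) do not apply, which is exactly why bespoke arguments are needed. I would first dispose of the non-embeddable cases: $\overline{C_3}$ is edgeless and $e(\overline{C_4})=2<4$, so neither leaves room for a second copy, while $\overline{2C_3}=K_{3,3}$ is bipartite and hence cannot contain the two triangles of $2C_3$.

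For the uniquely embeddable graphs I would first exploit the regularity of $\overline{H_\sigma}$. When $n=5$ it is edgeless, forcing $H_\sigma=K_5$; when $n=6$ it is a perfect matching, forcing $H_\sigma$ to be the octahedron $K_{2,2,2}$. In both cases the isomorphism type is independent of $\sigma$, so $C_5$ and $C_6$ are uniquely embeddable. The four remaining unique graphs have no forced complement. For $3C_3$ and $4C_3$ I would embed into the complete multipartite complements $\overline{3C_3}=K_{3,3,3}$ and $\overline{4C_3}=K_{3,3,3,3}$, observe that every triangle there is a transversal of the parts so that a copy of $kC_3$ is exactly a parallel class of transversals, and then check that any two parallel classes superpose to the same graph up to isomorphism (for $3C_3$ one finds $\overline{H_\sigma}$ equal to the self-complementary $3\times 3$ rook's graph, i.e.\ the Paley graph of order $9$, independently of the class). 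The two genuinely small cases $C_3\cup C_4$ ($n=7$) and $C_3\cup C_5$ ($n=8$) I would settle by a short enumeration of where the unique triangle and the longer cycle of $\sigma(G)$ can be placed, verifying that $H_\sigma$ is unique up to isomorphism.

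For every other $2$-factor I must produce two embeddings with non-isomorphic superpositions, and this is the bulk of the work. The model case is a single long cycle $C_n$: writing it as the consecutive pairs $\{i,i+1\}$ on $\mathbb{Z}_n$, the multiplier maps $\sigma_j(i)=ji$ realise circulant superpositions $C_n(1,j)$, and for $n$ large enough one can choose $j$ so that $C_n(1,2)$ (which carries the triangles $\{i,i+1,i+2\}$) is non-isomorphic to a triangle-free $C_n(1,j)$; the triangle count of $H_\sigma$, equivalently its girth, is then the separating invariant. For the multi-cycle case I would lay the vertices out in consecutive blocks, embed each long cycle inside its own block by this circulant recipe, and route the images of any short cycles $C_3,C_4$ along complement edges between blocks, using the extra freedom of having $k\ge 2$ cycles to make one superposition connected (or containing a mixed triangle) and the other not.

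The hardest part will be the boundary cases lying just beyond the exceptional list, where the complement is small and the naive construction degenerates. For instance $C_7(1,2)\cong C_7(1,3)$, so the multiplier trick fails and $C_7$ must be handled directly, exhibiting one embedding with $\overline{H_\sigma}\cong C_7$ and another with $\overline{H_\sigma}\cong C_3\cup C_4$; analogous care is needed for graphs such as $C_3\cup C_6$, $C_4\cup C_4$, $2C_3\cup C_m$ and $5C_3$, all of which sit close to the tight range. In each such case I would certify non-isomorphism of the two superpositions by a finer invariant --- the multiset of cycle lengths of the $2$-regular or more general complement, the exact triangle count, or the component structure of $H_\sigma$ --- rather than relying on girth alone, and I would organise the whole non-uniqueness argument as an induction on the partition $(n_1,\dots,n_k)$ so that adding a cycle or lengthening one reduces to an already-treated configuration plus the extra room it provides.
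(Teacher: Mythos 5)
Your treatment of the small cases is sound and essentially the paper's: the non-embeddability of $C_3$, $C_4$, $2C_3$, the forced superpositions for $C_5$ and $C_6$ via the $(n-5)$-regularity of $\overline{H_\sigma}$, the two complements $C_7$ and $C_3\cup C_4$ for $n=7$, and the disconnected-to-connected upgrade (the paper's Lemma \ref{l1}) all match. The genuine gap is in your central construction for a single cycle. The map $\sigma_2(i)=2i$ is not a permutation of $\mathbb{Z}_n$ when $n$ is even, and the distance-two chords $\{i,i+2\}$ then form two cycles of length $n/2$ rather than one $n$-cycle, so $C_n(1,2)$ is not of the form $C_n\oplus\sigma(C_n)$ for any even $n$: your triangle-rich embedding does not exist for half the cycles, and this is a structural failure, not a boundary effect. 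Worse, for even $n$ every admissible multiplier $j$ is odd and a short length computation shows $C_n(1,j)$ is then always triangle-free, so the dichotomy ``triangles versus no triangles'' cannot be realized inside the multiplier family at all; and for $n=9$ the admissible multipliers are $\pm2,\pm4$ with $C_9(1,4)\cong C_9(1,2)$, so all multiplier superpositions of $C_9$ are isomorphic (and all contain triangles) --- another case missing from your list. The paper avoids both problems by switching the separating invariant from $K_3$ to $K_4$: a packing containing $K_4$ is obtained for every $n\ge 8$ by deleting four consecutive vertices, packing the remaining path via Theorem \ref{E3}, and placing a clique on the deleted vertices (Lemma \ref{l7}), while the $K_4$-free packing is the circulant $C_n(1,2)$ for odd $n$ and $C_n(1,r)$ with $r=n-p$ for a prime $p$ between $n/2$ and $n-2$ when $n$ is even. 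You need something of this kind before your argument closes.

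A second gap, larger in volume: for unions of cycles your block construction presupposes that each block is separately packable, which fails exactly when a block would have to be $C_3$, $C_4$ or $2C_3$; the resulting exceptional families ($C_3\cup C_p$, $C_4\cup C_p$, $2C_3\cup C_p$, $C_3\cup 2C_4$, $3C_4$, $3C_3\cup C_4$, $5C_3$, \dots) are where most of the paper's work lies, and your list of boundary cases omits several of them, each requiring an explicit pair of packings certified non-isomorphic by ad hoc invariants (planarity, bipartiteness, $2$-connectivity, local neighbourhood structure). Your $4C_3$ argument also needs repair: in $K_{3,3,3,3}$ a triangle is a transversal of only three of the four parts, so a red copy of $4C_3$ is not a parallel class of transversals; the paper instead proves by counting that every three black triangles contain exactly one red triangle and deduces uniqueness of the packing from that.
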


The proof of Theorem \ref{Sums} is given in the next sections. Section 2 contains the case of cycles ($k=1$), while Section 3 and Section 4 deal with union of two or three cycles and union of $k$ cycles where $k\geq4$ respectively. 

{\bf Remark.} We will often present packing of a graph $G$ in figures. Therefore we need to introduce additional notation. We say that the first (initial) copy of a graph $G$ is \textit{black} and the second copy of $G$ is \textit{red} in the packing of $G$. In figures we draw with a continuous black line the first copy of $G$ and we draw with a dashed red line the second copy of $G$.

We denote by $B(X, Y)$ graph which contains all edges between two disjoint sets $X$ and $Y$. We will often use this notation for cycles $C_4$ in packings which contain these cycles. The following lemma will be useful through the proof.
\begin{lem}
If a graph $G = C_{n_1}\cup C_{n_2}\cup \ldots \cup C_{n_k}$ has a packing $\sigma$ such that the graph $G\oplus\sigma (G)$ is not connected $($a disconnected packing$)$, then $G$ has another packing $\hat{\sigma}$ such that the graph $G\oplus\hat{\sigma} (G)$ is connected $($a connected packing$)$. In particular, the graph $G$ is not uniquely packable.
\label{l1}
\end{lem}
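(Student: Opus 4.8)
The plan is to exploit the structure of the union (placement) graph $G\oplus\sigma(G)$, which is $4$-regular since it is the edge sum of the two $2$-regular copies $G$ (black) and $\sigma(G)$ (red). First I would record the key structural observation: every black cycle and every red cycle is a connected subgraph of $G\oplus\sigma(G)$, hence each of them lies entirely inside one connected component. Consequently each component of a (dis)connected packing is spanned by a union of \emph{whole} black cycles together with a union of \emph{whole} red cycles, and in particular \textbf{no edge of $G$ and no edge of $\sigma(G)$ joins two distinct components}. This last fact is the engine of the whole argument.

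Next I would argue by induction on the number $t\ge 2$ of components of $G\oplus\sigma(G)$, the goal at each step being to produce a new packing whose placement graph has exactly $t-1$ components; iterating down to $t=1$ yields the desired connected packing $\hat\sigma$. For the inductive step, pick two components with vertex sets $W_1$ and $W_2$. Since the red cycles partition $V(G)$ and each lies in a single component, $W_1$ contains a red cycle $R_1=(x_1x_2\ldots x_p)$ and $W_2$ contains a red cycle $R_2=(y_1y_2\ldots y_q)$, with $p,q\ge 3$. I then modify $\sigma$ only by interchanging the two images $x_1$ and $y_1$; that is, I set $\hat\sigma=(x_1\,y_1)\circ\sigma$, which is again a permutation of $V(G)$, so $\hat\sigma(G)\cong G$ automatically and no cycle lengths are disturbed.

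The verification splits into validity and connectivity. For \emph{validity}, note that passing from $\sigma(G)$ to $\hat\sigma(G)$ leaves every edge unchanged except the four red edges incident to $x_1$ or $y_1$, which become $y_1x_2,\ y_1x_p,\ x_1y_2,\ x_1y_q$. Each of these joins a vertex of $W_1$ to a vertex of $W_2$, and by the key observation above $G$ has no such cross-edge; hence all four new red edges are non-edges of $G$, so $\hat\sigma$ is still an embedding. For \emph{connectivity}, a short trace shows that the swap turns $R_1\cup R_2$ into the two red cycles $(y_1x_2x_3\ldots x_p)$ and $(x_1y_2y_3\ldots y_q)$, each of which contains vertices of both $W_1$ and $W_2$. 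Since the black cycles of $W_1$ and of $W_2$ are untouched, either of these new red cycles links $W_1$ and $W_2$ into one component, so the placement graph of $\hat\sigma$ has $t-1$ components. Because black edges never join distinct components at any stage, the hypothesis of the inductive step is maintained, and the induction carries through to a connected packing.

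Finally, the ``not uniquely packable'' conclusion is immediate: the original placement graph $G\oplus\sigma(G)$ is disconnected while the constructed $G\oplus\hat\sigma(G)$ is connected, and a connected graph is never isomorphic to a disconnected one, so $\sigma$ and $\hat\sigma$ are distinct embeddings. I expect the only delicate point to be the validity check, namely that the newly created red edges are genuinely non-edges of $G$; this is precisely what the observation ``no black edge crosses components'' resolves, and it also guarantees that the hypothesis persists after each merge. Everything else (the cycle trace, distinctness of $x_2,x_p$ and of $y_2,y_q$) is routine once $p,q\ge 3$ is used.
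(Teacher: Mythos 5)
Your overall strategy --- merging two components by exchanging a vertex of a red cycle in one component with a vertex of a red cycle in another, and checking validity of the new red edges via the observation that no black edge joins two components --- is the same surgery the paper performs; the paper merely phrases it as an extremal argument (take a packing with the fewest components and derive a contradiction) instead of an induction, and your validity check and the final distinctness remark are fine. The genuine problem is the step where you assert that the placement graph of $\hat\sigma$ has $t-1$ components. After the swap, the subgraph induced on $W_1$ is not $H_1$ but $H_1$ minus the two red edges $x_1x_2$ and $x_1x_p$, and this deletion may disconnect $W_1$: the two red edges at $x_1$ can in principle form a $2$-edge cut separating a piece $A_1\ni x_1$ (a union of whole black cycles whose vertices other than $x_1$ are covered by whole red cycles) from a piece $A_2\supseteq\{x_2,\dots,x_p\}$, and nothing in your argument excludes this. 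If the analogous split into $B_1\ni y_1$ and $B_2\supseteq\{y_2,\dots,y_q\}$ also occurs at $y_1$, then the four new red edges join $A_1$ only to $B_2$ and $B_1$ only to $A_2$, so $W_1\cup W_2$ again falls into two components $A_1\cup B_2$ and $A_2\cup B_1$, the count does not drop, and your induction stalls. Your sentence ``either of these new red cycles links $W_1$ and $W_2$ into one component'' tacitly assumes that each $W_i$ remains internally connected after its two red edges are removed, which is exactly the point at issue; you flag validity as the delicate step, but it is this connectivity step that actually needs care.

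The paper closes this hole by choosing each swapped vertex to be a non-cut vertex of its component (such a vertex exists in every connected graph): deleting its two incident red edges then leaves the component connected, because the vertex still hangs on by its two black edges, so the merge genuinely produces a single component. Your proof goes through once you add the same one-line refinement to the choice of $x_1$ and $y_1$ (every vertex lies on a red cycle, so a non-cut vertex is always an admissible choice).
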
 
\begin{proof} 
Let's choose the packing $\sigma$ with the least number of connected components. If $H=G\oplus \sigma(G)$ is connected, we're done. If not, let $H_1, H_2$ be two components of $H$.

Suppose $y_1$ is a vertex of $H_1$ such that removing the two red edges $y_1^-y_1$ and $y_1^+y_1$ leaves
$H_1$ connected where $y_1^-$ and $y_1^+$ are neighbors of $y_1$ on the red cycle in $H_1$. In the same way, we select $y_2$, a vertex belonging to the component $H_2$.

If now instead of the edges $y_1^-y_1$ and $y_1^+y_1$ we draw two red edges $y_2^-y_1$ and $y_2^+y_1$, and instead of the edges $y_2^-y_2$ and $y_2^+y_2 $ we draw two red edges $y_1^-y_2$ and $y_1^+y_2$, we get a new packing $\hat{\sigma} $ where two components $H_1$ and $H_2$ become one connected component, contradiction with the choice of \linebreak packing $\sigma$.

To complete the proof, it suffices to show that for each connected component of the graph $H$ one can choose a vertex as we did above. 

Just take a vertex that is not cut vertex. Such a vertex exists in every connected component. For example, the last vertex on the longest component path.
\end{proof}

\section{Case $k=1$}
Let $G=C_n$ be a cycle of order $n$. It is easy to see that neither $C_3$ nor $C_4$ is embeddable. 

The cycle $C_5$ is embeddable but for each embedding $\sigma$ we have $C_5\oplus \sigma (C_5) = K_5$. So, $C_5$ is uniquely embeddable.

The cycle $C_6$ is also embeddable. For each embedding $\sigma$ the graph  $C_6\oplus \sigma (C_6)$ is a 4-regular subgraph of $K_6$. The complement of such a graph is a 1-factor in $K_6$. Thus, all these graphs are isomorphic. So, $C_6$ is uniquely embeddable.

Two distinct embeddings of $C_7$ are given in Figure \ref{fig1}. In the first one, the complement of the graph $C_n\oplus \sigma (C_n)$ is isomorphic to $C_7$ while in the second one, to $C_3 \cup C_4$. 
\begin{figure}[h!]
\centering
\includegraphics[width=9cm]{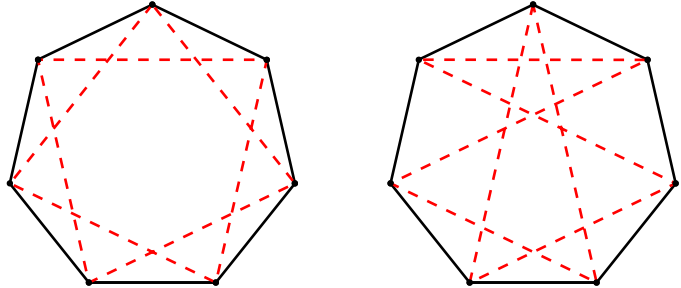}
\caption{Two distinct embeddings of $C_7$.}
\label{fig1}
\end{figure}

For $n\ge 8$ we shall show that there are at least two distinct embeddings of $C_n$: 

A) One such that the graph $C_n\oplus \sigma (C_n)$ contains  a clique $K_4$ and 

B) another such that the graph $C_n\oplus \sigma (C_n)$ is $K_4$-free.

\trou
{\bf Case A.} 

Denote by $x, a_1, a_2, a_3, a_4, y$ six consecutive vertices of $G=C_n$ and by $P$ the path joining $x$ and $y$ obtained from $C_n$ by removing vertices $\{a_1, a_2, a_3, a_4\}$ \emph{i.e.} $P = G'=G\setminus \{a_1, a_2, a_3, a_4\}$. Since $n\ge 8$ $P$, has at least four vertices. By Theorem~\ref{E3} there is a permutation, say $\sigma '$ being an embedding of $P$. Let $x'= \sigma ' (x)$ and $y'= \sigma ' (y)$. Figure~\ref{fig2} shows how to extend $\sigma '$ to get an embedding of $C_n$. Let us observe that the vertices $\{a_1, a_2, a_3, a_4\}$ induce a clique $K_4$. 
\begin{figure}[h!]
\centering
\includegraphics[width=10cm]{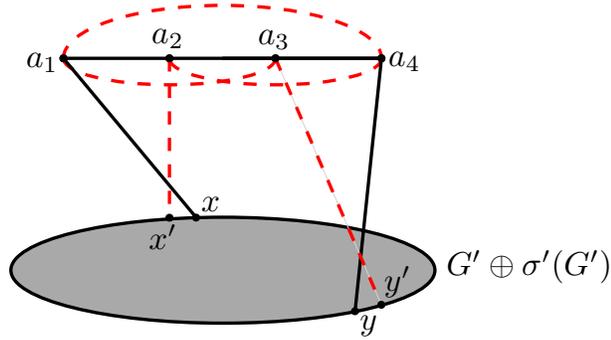}
\caption{If $G'$ is embeddable, then $G \oplus \sigma(G)$ contains $K_4$.}
\label{fig2}
\end{figure}

We will use the above reasoning often, so it will be convenient to formulate it in the form of a lemma.
\begin{lem}
Denote by $a_1, a_2, a_3, a_4$ four vertices of $G$ inducing a path in $G$. If the graph obtained from $G$ by removing vertices $\{a_1, a_2, a_3, a_4\}$ is packable, then the graph $G$ is also packable and there is a packing $\sigma$ of $G$ such that $G \oplus \sigma(G)$ contains a clique $K_4$.
\label{l7}
\end{lem}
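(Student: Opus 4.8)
The plan is to take the given packing of $G' = G \setminus \{a_1,a_2,a_3,a_4\}$ and extend it to a packing of $G$ by keeping it unchanged on $V(G')$ and prescribing a carefully chosen permutation of the four deleted vertices. The permutation will be selected precisely so that the three path-edges of the black copy on $\{a_1,a_2,a_3,a_4\}$, together with their three images under $\sigma$, account for all six edges of a $K_4$.

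First I would record the local structure around the deleted path. Since $G$ is a union of cycles and $a_1a_2a_3a_4$ induces a path, these are four consecutive vertices on one cycle; hence $a_2$ and $a_3$ have both their neighbours inside $\{a_1,\dots,a_4\}$, while $a_1$ and $a_4$ have exactly one neighbour each outside, say $x$ and $y$ (with $x=y$ when that cycle has length $5$), and $x,y \in V(G')$. Let $\sigma'$ be an embedding of $G'$, which exists by hypothesis, and define $\sigma$ by $\sigma|_{V(G')}=\sigma'$ together with $\sigma(a_1)=a_3$, $\sigma(a_2)=a_1$, $\sigma(a_3)=a_4$, $\sigma(a_4)=a_2$. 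This is a permutation of $V(G)$ because it permutes the two blocks $V(G')$ and $\{a_1,\dots,a_4\}$ separately. The black path $a_1a_2,\,a_2a_3,\,a_3a_4$ is then mapped to $a_1a_3,\,a_1a_4,\,a_2a_4$, so these six edges induce a $K_4$; this explains the cyclic choice $(a_1\,a_3\,a_4\,a_2)$, since $a_3,a_1,a_4,a_2$ is exactly a Hamilton path of $K_4$ running through the three missing edges.

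The remaining---and main---work is to verify that $\sigma$ is genuinely an embedding, i.e.\ that no black edge is sent to a black edge. Edges lying inside $G'$ are handled by $\sigma'$: their images stay inside $V(G')$ and avoid the edges of $G'$, hence avoid the edges of $G$ there. The three interior path-edges were just checked above. The only delicate point is the two boundary edges $xa_1$ and $a_4y$: their images $\sigma'(x)a_3$ and $a_2\sigma'(y)$ each join a vertex of $V(G')$ to a vertex of $\{a_1,\dots,a_4\}$, and I expect this to be the crux of the argument. I would argue they cannot be black edges because the only black edges meeting $a_3$ (respectively $a_2$) are interior path-edges, whose other endpoint lies in $\{a_1,\dots,a_4\}$ rather than in $V(G')$, whereas $\sigma'(x),\sigma'(y)\in V(G')$. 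The degenerate case $x=y$ is dispatched in the same way. Once this edge-disjointness is confirmed, $\sigma$ is a packing of $G$ and $G\oplus\sigma(G)$ contains the claimed $K_4$, completing the proof.
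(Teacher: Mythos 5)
Your proposal is correct and follows essentially the same route as the paper: extend a packing of $G'=G\setminus\{a_1,a_2,a_3,a_4\}$ by permuting the four deleted vertices so that the three red images of the path edges are exactly the three missing edges of $K_4$ on $\{a_1,a_2,a_3,a_4\}$, the boundary edges being harmless since they join $V(G')$ to $a_2$ or $a_3$, whose black neighbours all lie inside the deleted set. The only difference is presentational: the paper delegates the extension to Figure~\ref{fig2}, whereas you write out the permutation $(a_1\,a_3\,a_4\,a_2)$ and the verification explicitly, including the degenerate case $x=y$.
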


\trou
{\bf Case B.} Denote by $v_1, v_2, v_3, \ldots, v_n$  consecutive vertices of $C_n$. We shall consider two cases. 

{\bf Subcase B1.} $n$ is odd. 

Then, the edges $v_iv_{i+2}$ $\pmod n$ define a cycle of length $n$. This cycle can be considered as an image of $C_n$ by a permutation, say $\sigma$. We shall show that  the graph $H= C_n\oplus \sigma (C_n)$ is $K_4$-free. Suppose that $H$ contains a clique on four vertices. It has six edges and it is easy to see that three of them should belong to the first copy of $C_n$ and the remaining three to the second copy of $C_n$, each of these triples forming a path of length three in the corresponding copy. But a path of length three in $C_n$ should be induced by four consecutive vertices $v_i, v_{i+1}, v_{i+2}, v_{i+3}$ $\pmod n$. The fact that $v_i, v_{i+3}$ is not an edge of the second (dashed) copy of $C_n$ finishes the proof of this case. 

{\bf Subcase B2.} $n$ is even. 

It is easy to see that the edges of the form $v_iv_{i+r} \pmod n$ define a cycle of length $n$ if $r$ and $n$ are coprime.  In order to prove the existence of such an integer $r$ we can use, for instance,  the well-known Chebyshev's theorem saying that for each integer $k\ge 4$ there is a prime number between  $k$ and $2k-2$.  Denote by $p$ such a number where $k=\frac{n}{2}$. Since a prime number $p$ and $n$ are surely coprime, $r$ and $n$ where $r=n-p$ are also coprime. Moreover, we have  $3\le r \le \frac{n}{2}-1$.  Similarly as above, it is easy to see that the graph formed by $C_n$ and the edges of the form  $v_iv_{i+r} \pmod n$ is $K_4$-free. 

\section{Case $k=2$ or $k=3$}

\subsection{Case k=2.}
Let $G=C_{n_1} \cup C_{n_2}$, where $n_1\leq n_2$. If $n_1 \geq 5$ we have unconnected packing of $G$ which consists of two components. Each of these components we obtain as a packing of two copies of a cycle in appropriate complete graph from the Case $k=1$. Thus from Lemma \ref{l1} the graph $G$ is not uniquely packable. Therefore we have to consider two subcases $G=C_3 \cup C_p$ where $p \geq 3$ and $G=C_4 \cup C_p$ where $p \geq 4$.

\subsubsection{Subcase $G=C_3 \cup C_p$ where $p \geq 3$.} It is easy to see that $C_3 \cup C_3$ is not embeddable. We show that $C_3 \cup C_4$ and $C_3 \cup C_5$ are uniquely embeddable. We start packing of $\mathbf{C_3 \cup C_4}$ from the black copy in which we denote by $u_1$, $u_2$, $u_3$ consecutive vertices of $C_3$ and by $y_1$, $y_2$, $x_2$, $x_1$ consecutive vertices of $C_4$. To draw a red triangle, we need to use one of the vertices of the black triangle and two opposite vertices from the black cycle $C_4$. Without loss of generality we can use vertices $u_1$, $x_1$, $y_2$, since all possibilities give isomorphic graphs. Now, we have only one possibility to draw the remaining red cycle $C_4$.

We start packing of $\mathbf{C_3 \cup C_5}$ from the black copy in which we denote by $u_1$, $u_2$, $u_3$ consecutive vertices of $C_3$ and by $y_1$, $y_2$, $z$, $x_2$, $x_1$ consecutive vertices of $C_5$. To draw a red triangle, we need to use one of the vertices of the black triangle and two opposite vertices from the black cycle $C_5$. Without loss of generality we can use vertices $u_3$, $x_2$, $y_2$. We start drawing remaining red $C_5$ from the vertex $z$. We can easily see that we cannot have edges $zu_1$ and $zu_2$ or $zx_1$ and $zx_2$ in this red $C_5$. Therefore we have to draw first red edge from $z$ to the vertex on black $C_5$ and the second red edge to the vertex on black $C_3$. Thus we have only one possibility to draw the remaining red cycle $C_5$. Therefore the packing of $C_3 \cup C_5$ is unique. 

Now we present plane and not planar packing of $\mathbf{C_3 \cup C_6}$ which could be extended to packings of $C_3 \cup C_p$, where $p$ is from the set $\{8, 10, 12, \ldots \}$. Two distinct packings of $C_3 \cup C_6$ are presented in Figure \ref{fig3}. 
\begin{figure}[h!]
\centering
\includegraphics[width=12.5cm]{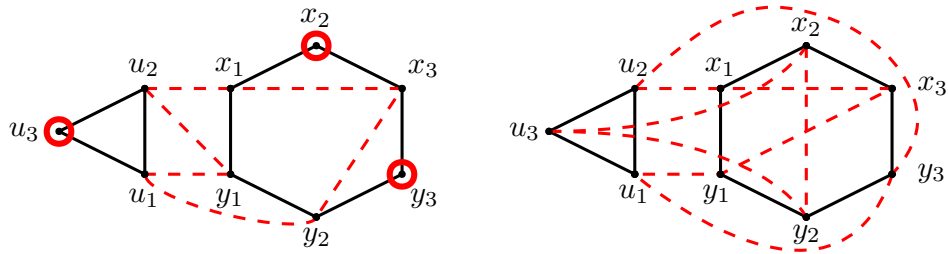}
\caption{Two distinct packings of $C_3 \cup C_6$. For the clarity of the drawing, the outer triangular face (connecting vertices marked with a circle) is not drawn.}
\label{fig3}
\end{figure}
We can easily see that the first (left) packing of $C_3 \cup C_6$ is plane. We extend this packing by replacing the edge $y_2y_3$ by the path $y_2a_1a_2\dots a_ly_3$ and the edge $x_3y_3$ by the path $x_3b_1b_2\dots b_ly_3$, where $l=\frac{p-6}{2}$. Then we add a path $y_2b_1a_1b_2a_2\dots b_la_l$ and replace the edge $u_1y_2$ by the edge $u_1a_l$. Note that the packing which we obtain is plane. Now we prove that the second (right) packing of $C_3 \cup C_6$ is not planar. Vertices $u_1$, $u_2$ and $u_3$ induce a triangle. If we add to this triangle the vertex $x_1$ together with  paths $x_1u_2$, $x_1y_1u_1$ and $x_1x_2u_3$ we obtain a subgraph homeomorphic to $K_4$. Thus if we add to this subgraph the vertex $y_3$ together with subsequent paths $y_3u_1$, $y_3u_2$, $y_3y_2u_3$ and $y_3x_3x_1$ we obtain a subgraph which is a subdivision of $K_5$. It follows from Kuratowski's theorem that the graph is not planar. We can extend this packing by replacing the edge $y_1y_2$ by the path $y_1a_1a_2\dots a_ly_2$ and the edge $x_3y_3$ by the path $x_3b_1b_2\dots b_ly_3$, where $l=\frac{p-6}{2}$. Then we add a path $y_1b_1a_1b_2a_2\dots b_la_l$ and replace the edge $u_1y_1$ by the edge $u_1a_l$. Note also that the packing which we obtain is not planar.

We show plane and not planar packing of $\mathbf{C_3 \cup C_7}$ which could be extended to packings of $C_3 \cup C_p$, where $p$ is from the set $\{9, 11, 13, \ldots \}$. Two distinct packings of $C_3 \cup C_7$ are presented in Figure \ref{fig4}. 
\begin{figure}[h!]
\centering
\includegraphics[width=13.5cm]{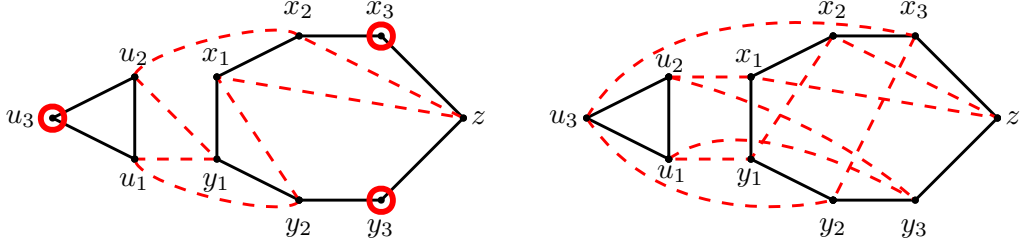}
\caption{Two distinct packings of $C_3 \cup C_7$. For the clarity of the drawing, the outer triangular face (connecting vertices marked with a circle) is not drawn.}
\label{fig4}
\end{figure}
We can easily see that the first (left) packing of $C_3 \cup C_7$ is plane. We extend this packing by replacing the edge $x_1x_2$ by the path $x_1a_1a_2\dots a_lx_2$ and the edge $zx_3$ by the path $zb_1b_2\dots b_lx_3$, where $l=\frac{p-7}{2}$. Then we replace the edge $zx_2$ by the edge $za_1$ and we add a path $a_1b_1a_2b_2\dots a_lb_lx_2$. Note that the packing which we obtain is plane. Now we prove that the second (right) packing of $C_3 \cup C_7$ is not planar. Vertices $u_1$, $u_2$ and $u_3$ induce a triangle. If we add to this triangle the vertex $x_1$ together with paths $x_1u_2$, $x_1y_1u_1$ and $x_1x_2x_3u_3$ we obtain a subgraph homeomorphic to $K_4$. Thus if we add to this subgraph the vertex $y_3$ together with subsequent paths $y_3u_1$, $y_3u_2$, $y_3y_2u_3$ and $y_3zx_1$  we obtain a subgraph which is a subdivision of $K_5$. It follows from Kuratowski's theorem that the graph is not planar. We can extend this packing by replacing the edge $y_3z$ by the path $y_3a_1a_2\dots a_lz$ and the edge $x_2x_3$ by the path $x_2b_1b_2\dots b_lx_3$, where $l=\frac{p-7}{2}$. Then we replace the edge $x_1z$ by the edge $x_1a_1$ and we add a path $a_1b_1a_2b_2\dots a_lb_lz$. Note also that the packing which we obtain is not planar.

\subsubsection{Subcase $G=C_4 \cup C_p$ where $p \geq 4$.} 
For a graph $G=\mathbf{C_4 \cup C_4}$ we show bipartite and not bipartite packing. 
In the graph $G$ we denote by $x_1, \dots, x_4$ and $y_1, \dots, y_4$ vertices from two sets $X$ and $Y$. In both packings we draw black cycles $B(\{x_1, x_2\}, \{y_1, y_2\})$ and  $B(\{x_3, x_4\}, \{y_3, y_4\})$. In the first packing of $G$ we draw red cycles $B(\{x_1, x_2\}, \{y_3, y_4\})$ and $B(\{x_3, x_4\}, \{y_1, y_2\})$. Note that $X$ and $Y$ are independent. In the second packing we draw red cycles $B(\{x_2, y_2\}, \{x_3, y_3\})$ and $B(\{x_1, y_1\}, \{x_4, y_4\})$. Note that each red cycle with two independent black edges induce a subgraph $K_4$. Therefore this packing of $G$ is not bipartite.

We present plane and not planar packing of $\mathbf{C_4 \cup C_5}$. These two packings could be extended to packings of $C_4 \cup C_p$, where $p$ is from the set $\{9, 11, 13 \ldots \}$. 
Two distinct packings of $C_4 \cup C_5$ are presented below in Figure \ref{fig6}. 
\begin{figure}[h!]
\centering
\includegraphics[width=11.4cm]{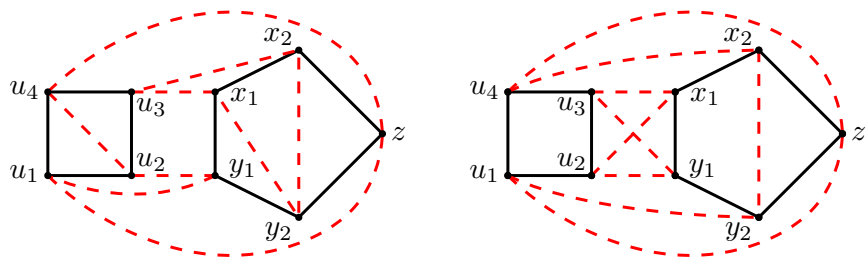}
\caption{Two distinct packings of $C_4 \cup C_5$.}
\label{fig6}
\end{figure}
It is obvious that the first (left) packing of $C_4 \cup C_5$ is plane. We extend this packing by replacing the edge $zy_2$ by the path $za_1a_2\dots a_ly_2$ and the edge $zx_2$ by the path $zb_1b_2\dots b_lx_2$, where $l=\frac{p-5}{2}$ and $l>1$. Then we replace the edge $u_1z$ by the edge $u_1a_1$ and we add a path $a_1b_1a_2b_2\dots a_lb_lz$. Note that the packing which we obtain is plane. Note also that the presented extension of plane packing does not work for $C_4 \cup C_7$. This case will be consider later. 

Now we prove that the second (right) packing of $C_4 \cup C_5$ is not planar and contains a subgraph $K_4$. Vertices $x_1$, $y_1$, $u_2$ and $u_3$ induce $K_4$. If we add to this subgraph the vertex $x_2$ together with paths $x_2x_1$, $x_2y_2y_1$, $x_2u_4u_3$ and $x_2zu_1u_2$ we obtain a subgraph which is a subdivision of $K_5$. It follows from Kuratowski's theorem that the graph is not planar. We can extend this packing by replacing the edge $y_1y_2$ by the path $y_1a_1a_2\dots a_ly_2$ and the edge $x_2z$ by the path $x_2b_1b_2\dots b_lz$, where $l=\frac{p-5}{2}$. Then we replace the edge $x_2y_2$ by the edge $x_2a_1$ and we add a path $a_1b_1a_2b_2\dots a_lb_ly_2$. Note also that the packing which we obtain is not planar.

We show plane and not planar packing of $\mathbf{C_4 \cup C_6}$ which could be extended to packings of $C_4 \cup C_p$, where $p$ is from the set $\{8, 10, 12, \ldots \}$. Two distinct packings of $C_4 \cup C_6$ are presented in Figure \ref{fig7}. 
\begin{figure}[h!]
\centering
\includegraphics[width=12.1cm]{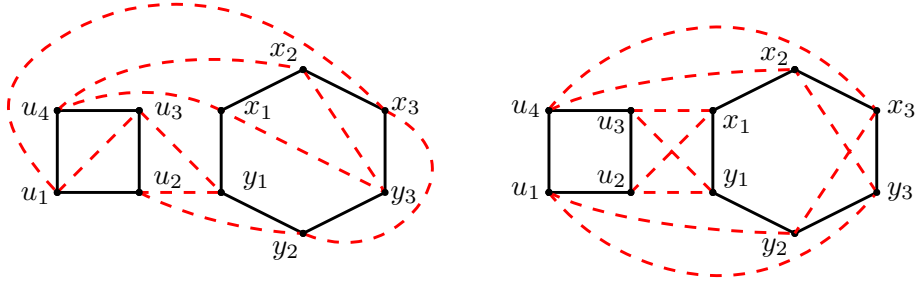}
\caption{Two distinct packings of $C_4 \cup C_6$.}
\label{fig7}
\end{figure}
It is obvious that the first (left) packing of $C_4 \cup C_6$ is plane. We extend this packing by replacing the edge $y_1x_1$ by the path $y_1a_1a_2\dots a_lx_1$ and the edge $y_2y_3$ by the path $y_2b_1b_2\dots b_ly_3$, where $l=\frac{p-6}{2}$. Then we replace the edge $y_2x_3$ by the edge $b_lx_3$ and we add a path $y_2a_1b_1a_2b_2\dots a_lb_lx_3$. Note that the packing which we obtain is plane. 

Now we prove that the second (right) packing of $C_4 \cup C_6$ is not planar. Vertices $x_1$, $y_1$, $u_2$ and $u_3$ induce $K_4$. If we add to this subgraph the vertex $y_3$ together with paths $y_3x_2x_1$, $y_3y_2y_1$, $y_3u_1u_2$ and $y_3x_3u_4u_3$ we obtain a subgraph which is a subdivision of $K_5$. It follows from Kuratowski's theorem that the graph is not planar. We can extend this packing by replacing the edge $y_1y_2$ by the path $y_1a_1a_2\dots a_ly_2$ and the edge $x_1x_2$ by the path $x_1b_1b_2\dots b_lx_2$, where $l=\frac{p-6}{2}$. Then we replace the edge $u_1y_2$ by the edge $u_1a_1$ and we add a path $a_1b_1a_2b_2\dots a_lb_ly_2$. Note also that the packing which we obtain is not planar.

Now, we show the remaining two distinct packings of $G=\mathbf{C_4 \cup C_7}$, the first with a subgraph $K_4$ and the second without $K_4$. The packing of $G$ with a subgraph $K_4$ we obtain from Lemma \ref{l7}. The packing of $G$ without $K_4$ is presented in Figure \ref{fig8}.
\begin{figure}[h!]
\centering
\includegraphics[width=6.1cm]{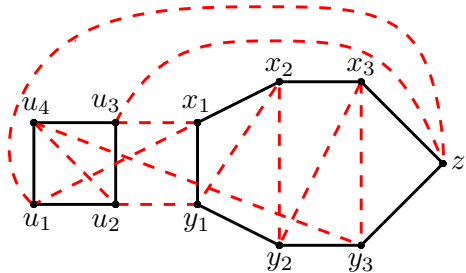}
\caption{The packing of $C_4 \cup C_7$ without $K_4$.}
\label{fig8}
\end{figure}
Note that a subgraph $K_4$ in a packing of $G$ could be obtained as a cycle $C_4$ from one copy of $G$ and two independent edges from the other copy or as three consecutive edges of a cycle $C_7$ from the first copy of $G$ and three consecutive edges of a cycle $C_7$ from the second copy. Therefore it suffices that we check vertices on cycles in black copy of $G$ whether they induce $K_4$. We left this easy check to the reader.

\subsection{Case k=3.} 

Let $G=C_{n_1} \cup C_{n_2} \cup C_{n_3}$, where $n_1\leq n_2 \leq n_3$. We can divide $G$ into two subgraphs $G_1=C_{n_1} \cup C_{n_2}$, $G_2=C_{n_3}$ and from the previous cases ($k=2$ and $k=1$) we get a packing of $G_1$ and $G_2$ except for $G=C_3 \cup C_4 \cup C_4$, $G=C_4 \cup C_4 \cup C_4$ and $G=C_3 \cup C_3 \cup C_p$ where $p \geq 3$. Thus from Lemma \ref{l1} the graph $G$ is not uniquely packable. Below we consider each of these exceptional graphs separately.

Two distinct packings of $\mathbf{C_3 \cup C_4 \cup C_4}$ are presented in Figure \ref{fig10}.
\begin{figure}[h!]
\centering
\includegraphics[width=12.5cm]{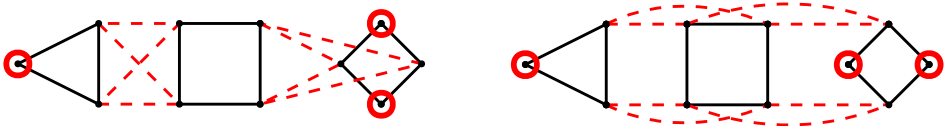}
\caption{Two distinct packings of $C_3 \cup C_4 \cup C_4$. For the clarity of the drawing, the cycles $C_3$ (connecting vertices marked with a circle) are not drawn.}
\label{fig10}
\end{figure}
We can easily see that the first (left) packing of $C_3 \cup C_4 \cup C_4$ contains a subgraph $K_4$. The second (right) packing of $G=C_3 \cup C_4 \cup C_4$ is without a subgraph $K_4$. Note that the subgraph $K_4$ in a packing of $G$ could be obtained as a cycle $C_4$ from one copy of $G$ and two independent edges from the other copy. Therefore it suffices that we check cycles $C_4$ form both copies of $G$.

For a graph $G=\mathbf{C_4 \cup C_4 \cup C_4}$ we show bipartite and not bipartite 
packing. In the graph $G$ we denote by $x_1, \dots, x_6$ and $y_1, \dots, y_6$ vertices from two sets $X$ and $Y$. In both packings of $G$ we draw black cycles $B(\{x_1, x_2\}, \{y_1, y_2\})$,  $B(\{x_3, x_4\}, \{y_3, y_4\})$  and $B(\{x_5, x_6\}, \{y_5, y_6\})$. In the first packing of $G$ we draw red cycles $B(\{x_1, x_2\}, \{y_3, y_4\})$, $B(\{x_3, x_4\}, \{y_5, y_6\})$ and $B(\{x_5, x_6\}, \{y_1, y_2\})$. Note that $X$ and $Y$ are independent. In the second packing we draw red cycles $B(\{x_2, y_2\}, \{x_3, y_3\})$, $B(\{x_4, y_4\}, \{x_5, y_5\})$ and $B(\{x_1, y_1\}, \{x_6, y_6\})$. Note that each red cycle with two independent black edges induce a subgraph $K_4$. Therefore this packing of $G$ is not bipartite.

Now we consider $G=\mathbf{C_3 \cup C_3 \cup C_p}$ where $p \geq 3$. If $p=3$ we start packing of $G$ from the black copy in which we denote by $x_1, x_2, x_3$, $y_1, y_2, y_3$ and $v_1, v_2, v_3$ vertices of three cycles $C_3$. Then we draw red triangles $x_1y_1v_1$, $x_2y_2v_2$ and $x_3y_3v_3$. We can easily see that this packing is unique. 

Two distinct packings of $G=C_3 \cup C_3 \cup C_4$ are presented in Figure \ref{fig12}.
\begin{figure}[h!]
\centering
\includegraphics[width=9.5cm]{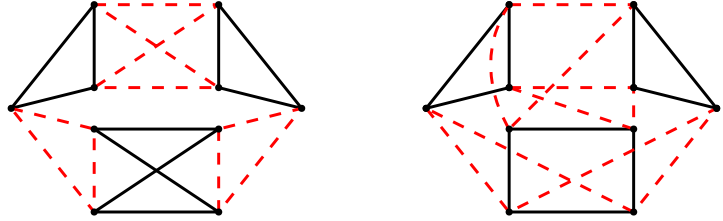}
\caption{Two distinct packings of $C_3 \cup C_3 \cup C_4$.}
\label{fig12}
\end{figure}
We can easily see that the first (left) packing of $G$ contains two subgraphs $K_4$. The second (right) packing of $G$ is without a subgraph $K_4$. Note that a subgraph $K_4$ in a packing of $G$ could be obtained as a cycle $C_4$ from one copy of $G$ and two independent edges from the other copy. Therefore it suffices that we check cycles $C_4$ form both copies of $G$.

Two distinct packings of $G=C_3 \cup C_3 \cup C_5$ are presented in Figure \ref{fig16}.
\begin{figure}[h!]
\centering
\includegraphics[width=11.7cm]{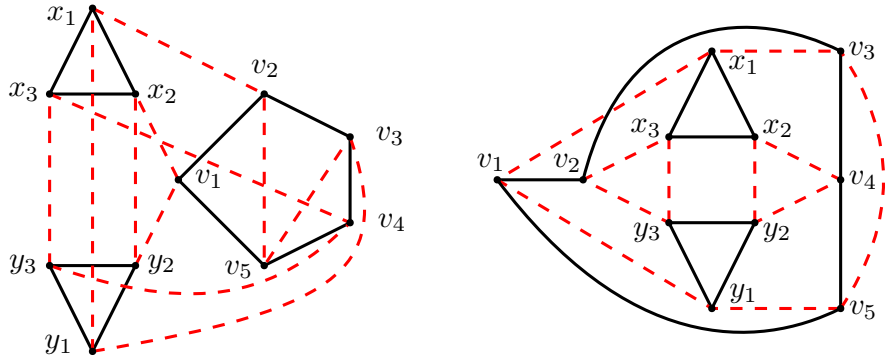}
\caption{Two distinct packings of $C_3 \cup C_3 \cup C_5$.}
\label{fig16}
\end{figure}
In the first (left) packing of $G$ there is a vertex $v_5$ such that its neighborhood induces a path of length three, while (as is relatively easy to check) the second (right) packing of $G$ does not contain such a vertex.

Let $G=C_3 \cup C_3 \cup C_6$. It is easy to see that we can pack two black triangles with a red cycle $C_6$ and two red triangles with a black cycle $C_6$. Thus we have disconnected packing of $G$. Then from Lemma \ref{l1} we get connected packing of $G$.

We present two distinct packings of $C_3 \cup C_3 \cup C_p$ where $p \geq 7$, the first with a subgraph $K_4$ and the second without $K_4$. We start from the first packing of $G$. We denote by $a_1, a_2, a_3, a_4$ four consecutive vertices of a cycle $C_p$ from $G$. Let $G'=G\setminus \{a_1, a_2, a_3, a_4\}$. We can easily see that $e(G') \leq |V(G')|-1$. By Theorem \ref{E3} there is a packing of $G'$. Thus from Lemma \ref{l7} we get a packing of $G$ with a subgraph $K_4$.

The second packing of $C_3 \cup C_3 \cup C_7$ and  $C_3 \cup C_3 \cup C_8$  without $K_4$ is presented in Figure \ref{fig13}. 
\begin{figure}[h!]
\centering
\includegraphics[width=\textwidth]{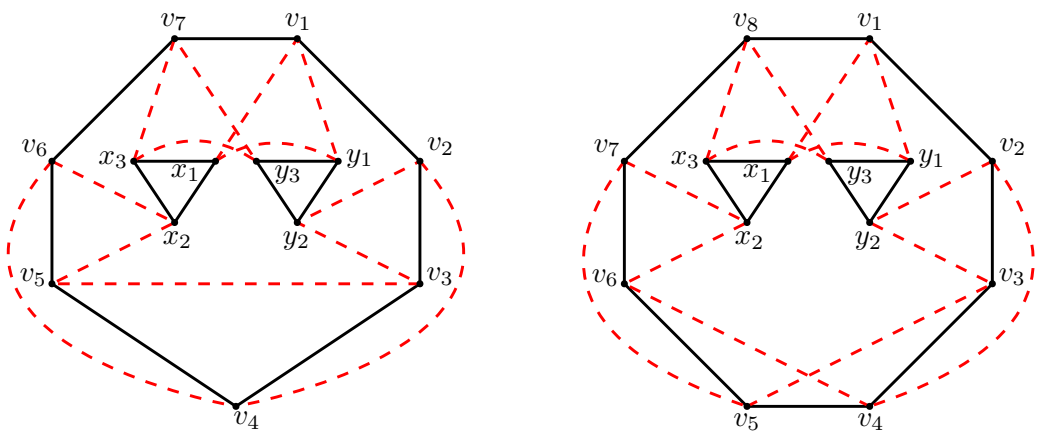}
\caption{The packing of $C_3 \cup C_3 \cup C_7$ and  $C_3 \cup C_3 \cup C_8$ without $K_4$.}
\label{fig13}
\end{figure}
Note that a subgraph $K_4$ in a packing of $G=C_3 \cup C_3 \cup C_p$, where $p\in \{7,8\}$, could be obtained as three consecutive edges of a cycle $C_p$ from the black copy of $G$ and three consecutive edges of a cycle $C_p$ from the red copy. 
Note that then red edges are of length two and three with respect to the distance on black cycle. Therefore it suffices that we check vertices on a cycle $C_p$ in one copy of $G$ whether they induce $K_4$. We left this easy check to the reader. 

We can extend the packing of $C_3 \cup C_3 \cup C_7$ to packings of  $C_3 \cup C_3 \cup C_p$, where $p$ is from the set $\{9, 11, 13, \ldots \}$. We replace the edge $v_3v_2$ by the path $v_3a_1a_2\dots a_lv_2$ and the edge $v_5v_6$ by the path $v_5b_1b_2\dots b_lv_6$, where $l=\frac{p-7}{2}$. Then we replace the edge $v_4v_2$ by the edge $v_4a_1$ and we add a path $a_1b_1a_2b_2\dots a_lb_lv_2$. Similarly we can extend the packing of $C_3 \cup C_3 \cup C_8$ to packings of  $C_3 \cup C_3 \cup C_p$, where $p$ is from the set $\{10, 12, 14, \ldots \}$. We replace the edge $v_3v_2$ by the path $v_3a_1a_2\dots a_lv_2$ and the edge $v_6v_7$ by the path $v_6b_1b_2\dots b_lv_7$, where $l=\frac{p-8}{2}$. Then we replace the edge $v_4v_2$ by the edge $v_4a_1$ and we add a path $a_1b_1a_2b_2\dots a_lb_lv_2$. Both  presented extensions of $C_3 \cup C_3 \cup C_7$ and $C_3 \cup C_3 \cup C_8$ do not contain a clique $K_4$, because we added red edges between vertices with distance more then three with respect to black cycle.

\section{Case $k\geq4$}
\subsection{Case k=4.}

Let $G=C_{n_1} \cup C_{n_2} \cup C_{n_3} \cup C_{n_4}$, where $n_1\leq n_2 \leq n_3 \leq n_4$. If at least two $n_i$ where $i \in \{1, 2, 3, 4\}$ are different from three then we can divide $G$ into two parts $G=G_1 \cup G_2$ so that $G_1$ and $G_2$ have packing. Therefore from Lemma \ref{l1} the graph $G$ is
not uniquely packable. Similarly when $n_4\geq5$. Thus we have to consider two subcases  $G=C_3 \cup C_3 \cup C_3 \cup C_3$ and $G=C_3 \cup C_3 \cup C_3 \cup C_4$. 

We start packing of $G=\mathbf{C_3 \cup C_3 \cup C_3 \cup C_3}$ from the black copy in which we denote vertices creating triangle $T_i$ by $\{a_i, b_i, c_i\}$ for $i\in \{1, 2, 3, 4\}$. Then we draw four red triangles with sets of vertices:  $\{a_1, a_2, a_3\}$, $\{b_2, b_3, b_4\}$, $\{c_1, c_3, c_4\}$ and $\{b_1, c_2, a_4\}$. Now we show that this packing of $G$ is unique (up to isomorphism). Note that all triangles in a packing of $G$ are "real" i.e. have all edges form black or red copy of $G$. We claim that each three black triangles include exactly one red triangle. We take three arbitrary black triangles $T_1$, $T_2$ and $T_3$. First, suppose that these black triangles do not include any red triangle. Thus each red triangle has at least one vertex outside $T_1 \cup T_2 \cup T_3$, namely in $T_4$. We get a contradiction. Second, suppose that these black triangles include exactly two triangles. Then the remaining two red triangles can use at most two vertices from $T_4$. We also get a contradiction. Therefore from the fact that three black triangles do not include three red triangles we get a confirmation of our claim. Thus without loss of generality we can assume that the second red triangle includes vertices from black triangles $T_2$, $T_3$ and $T_4$. This implies that the packing of remaining two red triangles is determined. Therefore the packing of $G$ is unique up to isomorphism.

Two distinct packings of $G=\mathbf{C_3 \cup C_3 \cup C_3 \cup C_4}$ are presented in Figure \ref{fig14}. 
\begin{figure}[h!]
\centering
\includegraphics[width=\textwidth]{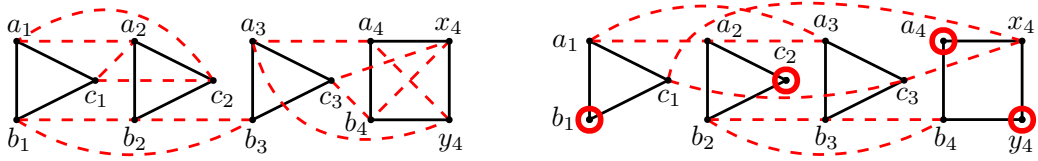}
\caption{Two distinct packings of $C_3 \cup C_3 \cup C_3 \cup C_4$.  For the clarity of the drawing, the cycle $C_4$ (connecting vertices marked with a circle) is not drawn.}
\label{fig14}
\end{figure}
We can easily see that the first (left) packing of $G$ is connected but the vertex $b_3$ is a cut vertex. Therefore it is not 2-connected. In the second (right) packing of $G$ each vertex from black triangle has two red edges to different black cycles. Moreover each two vertices from black triangle have edges to three remaining cycles. Therefore removing one vertex does not disconnect the graph.
\subsection{Case k=5.} 

Let $G=C_{n_1} \cup C_{n_2} \cup C_{n_3} \cup C_{n_4} \cup C_{n_5}$, where $n_1\leq n_2 \leq n_3 \leq n_4 \leq n_5$. If $n_5\geq4$ we can divide $G$ into two parts $G=G_1 \cup G_2$ so that $G_1=C_{n_1} \cup C_{n_2} \cup C_{n_3}$ and $G_2=C_{n_4} \cup C_{n_5}$ have packing. Therefore from Lemma \ref{l1} we know that the graph $G$ is not uniquely packable. Thus we have to consider $G=5C_3$.

We present two distinct packings of $G=5C_3$. We start both packings of $G$ from black copy in which we denote vertices creating triangle  $T_i$ by $\{a_i, b_i, c_i\}$ for $i\in \{1, \ldots, 5\}$. Then in the first packing of $G$ we draw five red triangles with sets of vertices:  $\{a_1, a_2, a_3\}$, $\{b_1, a_4, a_5\}$, $\{c_1, c_2, b_5\}$, $\{b_2, b_3, b_4\}$ and $\{c_3, c_4, c_5\}$. In the second packing of $G$ we draw five red triangles with sets of vertices: $\{a_1, a_2, a_3\}$, $\{b_1, b_2, b_3\}$, $\{c_1, b_4, b_5\}$, $\{c_2, a_4, a_5\}$ and $\{c_3, c_4, c_5\}$. Note that all triangles in both packings of $G$ are "real".
We can easily see that in the first packing of $G$ each nine vertices induce at most four triangles. In the second packing of $G$ nine vertices from black triangles $T_1$, $T_2$ and $T_3$ induce also two red triangles. Thus there exists nine vertices which induce five triangles in the second packing of $G$.

\subsection{Case $k\geq 6$.} 
Let $G=C_{n_1} \cup C_{n_2}\cup \ldots \cup C_{n_k}$, where $n_1\leq n_2 \leq \ldots \leq n_k$. We can divide $G$ into two parts $G=G_1 \cup G_2$ so that $G_1=C_{n_1} \cup C_{n_2} \cup C_{n_3}$ and $G_2=C_{n_4} \cup C_{n_5} \cup \ldots \cup C_{n_k}$. From the previous cases and the fact that $k\geq 6$ we have packings of $G_1$ and $G_2$. Thus $G$ has a disconnected packing. Therefore from Lemma \ref{l1} we get connected packing and we know that the graph $G$ is not uniquely packable.

\end{document}